\newtheorem{theorem}{Theorem}[section]
\newtheorem{prop}[theorem]{Proposition}
\theoremstyle{definition}
\newtheorem{definition}[theorem]{Definition}
\newtheorem{remark*}[theorem]{Remark}
\newtheorem{example}[theorem]{Example}
\newcommand{\D}{\mathbb{D}}
\newcommand{\C}{\mathbb{C}}
\newcommand{\T}{\mathbb{T}}
\begin{document}

%
%
%
%
%
%
%
%
%

\title{Hadamard multipliers of the Agler class}
\author{Greg Knese}

\address{Department of Mathematics, Washington University in St. Louis, St. Louis, MO 63130, USA}
\email{geknese@wustl.edu}


\subjclass{47A13, 46L07, 32A35, 32A38}

\keywords{Schur class, Schur-Agler class, Agler class, Hadamard product, 
von Neumann's inequality, multivariable operator theory}

\date{\today}
\dedicatory{In memory of Rien Kaashoek}



\begin{abstract}
We prove several results about functions which preserve
the Schur-Agler class under Hadamard or coefficient-wise product.
First, functions which preserve the Schur class necessarily
preserve the Schur-Agler class.
Second, ``moments'' of certain commuting operator tuples form
coefficients of Schur-Agler class preservers.
Finally, any preserver of the full matrix Schur-Agler class must
have coefficients given by moments of commuting operator tuples.
We also point out that any counterexample to the multivariable
von Neumann inequality can be used to derive a non-trivial
Agler class preserver.

\end{abstract}

\maketitle

\section{Introduction}
Given two power series $f(z) = \sum_{\alpha\in \mathbb{N}_0^d} f_{\alpha} z^{\alpha}$, 
$F(z) = \sum_{\alpha \in \mathbb{N}_{0}^{d}} F_{\alpha} z^{\alpha}$
we define the Hadamard or coefficient-wise product
\[
(f \star F)(z) = \sum_{\alpha\in \mathbb{N}_0^d} f_{\alpha} F_{\alpha} z^{\alpha}.
\]
Given a space of analytic functions $X$ 
we often wish to characterize those analytic functions $F$
with the property that for every $f \in X$, we have $f\star F \in X$.
When this happens we will say that $F$ preserves $X$
under Hadamard product.  The question is only
interesting for spaces that are not easily described
in terms of properties of their coefficients.
A sampling of results for various function spaces 
are in the papers \cites{Caveny, MR, RS, Tov0, Tov}. 

We define the \emph{Schur class} $\mathcal{S}_d$ to be 
the set of analytic functions on the unit polydisk $\D^d \subset \C^d$ that are
bounded by $1$ in supremum norm.  
Sheil-Small characterized the functions that 
preserve the Schur class.  
Even more, his result allows for restrictions
of the class based on power series support.
Given a subset $L \subset \mathbb{N}_0^d$ we write 
$\mathcal{S}_d(L)$ 
for the elements of $\mathcal{S}_d$ whose power series coefficients are
supported on the set $L$; namely, functions
with power series 
of the form $f(z) = \sum_{\alpha \in L} f_{\alpha} z^{\alpha}$.

\begin{theorem}[Sheil-Small \cite{sheilsmall} Theorem 1.3]
Let $L \subset \mathbb{N}_0^d$.
The function $F(z) = \sum_{\alpha \in \mathbb{N}_0^d} F_{\alpha} z^{\alpha}$ preserves $\mathcal{S}_d(L)$ under Hadamard product
(in the sense that for all $f \in \mathcal{S}_d(L)$ we have $f\star F \in \mathcal{S}_d(L)$)
if and only if there is a complex measure 
$\mu$ on the $d$-torus $\T^d$ of total variation $\|\mu\|\leq 1$ such
that for every $\alpha \in L$
\[
F_{\alpha} = \int_{\T^d} w^{\alpha} d\mu(w).
\]
In particular, for $L = \mathbb{N}_0^d$ we have
\[
F(z) = \int_{\T^d} \frac{1}{\prod_{j=1}^{d} (1-z_j w_j)} d\mu(w).
\]
\end{theorem}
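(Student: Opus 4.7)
My plan is to prove the two directions separately; necessity is the main content.

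For sufficiency, suppose $F_{\alpha} = \int_{\T^d} w^{\alpha} \, d\mu(w)$ for every $\alpha \in L$ and fix $f \in \mathcal{S}_d(L)$. Since $|f_\alpha|\leq 1$ for every $\alpha$ by Cauchy estimates, interchanging the sum and the integral (justified by absolute uniform convergence on compact subsets of $\D^d$) gives
\[
(f\star F)(z) = \sum_{\alpha \in L} f_\alpha z^\alpha \int_{\T^d} w^\alpha \, d\mu(w) = \int_{\T^d} f(wz)\, d\mu(w),
\]
where $wz$ denotes coordinate-wise multiplication. Since $wz \in \D^d$ whenever $w \in \T^d$ and $z \in \D^d$, we have $|f(wz)|\leq 1$ and hence $|(f\star F)(z)|\leq \|\mu\| \leq 1$. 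The explicit kernel formula for $L = \NN_0^d$ then falls out by summing the geometric series $\sum_\alpha (wz)^\alpha = \prod_j (1-z_j w_j)^{-1}$ inside the integral.

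For necessity, I would argue by duality. Let $P_L \subset C(\T^d)$ denote the space of trigonometric polynomials with Fourier support in $L$. For any $p = \sum_{\alpha \in L} p_\alpha z^\alpha \in P_L$ with $\|p\|_\infty \leq 1$, the function $p$ lies in $\mathcal{S}_d(L)$; by hypothesis so does $p\star F = \sum_{\alpha \in L} p_\alpha F_\alpha z^\alpha$. Since $p\star F$ is itself a polynomial, its supremum bound on $\D^d$ extends by continuity to $\overline{\D^d}$, and evaluation at $z = (1,\ldots,1)$ yields
\[
\Bigl|\sum_{\alpha \in L} p_\alpha F_\alpha\Bigr| \leq 1.
\]
Scaling, the linear functional $\Lambda \colon P_L \to \C$ defined by $\Lambda(p) = \sum_{\alpha \in L} p_\alpha F_\alpha$ is bounded of norm at most $1$ with respect to the sup norm on $\T^d$. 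Hahn--Banach extends $\Lambda$ to a norm-$\leq 1$ functional on $C(\T^d)$, and the Riesz representation theorem produces a complex measure $\mu$ with $\|\mu\| \leq 1$ representing this extension. Evaluating on the monomials $w^\alpha$ for $\alpha \in L$ recovers the desired formula $F_\alpha = \int_{\T^d} w^\alpha \, d\mu(w)$.

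The main conceptual step is recognizing that the preservation hypothesis manufactures a bounded linear functional on $P_L \subset C(\T^d)$; once this is in hand, the rest of the argument --- linear independence of $\{w^\alpha\}_{\alpha \in L}$ so that $\Lambda$ is unambiguously defined, the Hahn--Banach extension, and Riesz representation --- is standard. The only technical wrinkle is justifying the boundary evaluation of $p\star F$ at $(1,\ldots,1)$, and this is immediate because $p\star F$ is a polynomial.
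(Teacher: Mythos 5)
The paper cites Sheil--Small's theorem from the literature and does not prove it, so there is no in-paper proof to compare against; I am assessing your argument on its own. Both directions are correct. For sufficiency, the representation $(f\star F)(z) = \int_{\T^d} f(wz)\,d\mu(w)$ is justified (for fixed $z\in\D^d$ the series $\sum_\alpha f_\alpha z^\alpha w^\alpha$ converges absolutely and uniformly in $w\in\T^d$ because $|f_\alpha|\leq 1$) and immediately gives $|f\star F|\leq\|\mu\|\leq 1$, with the power-series support of $f\star F$ automatically contained in $L$. For necessity, your key observation is that $\Lambda(p):=\sum_{\alpha\in L}p_\alpha F_\alpha = (p\star F)(1,\ldots,1)$, so the preservation hypothesis plus continuity of the polynomial $p\star F$ up to $\overline{\D}^d$ yields $|\Lambda(p)|\leq \sup_{\T^d}|p|$ on the subspace of $C(\T^d)$ spanned by $\{w^\alpha:\alpha\in L\}$; since $L\subset\NN_0^d$ these monomials are linearly independent, so $\Lambda$ is well defined, and Hahn--Banach followed by the Riesz representation theorem produces the required $\mu$ with $\|\mu\|\leq 1$. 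The closing kernel formula for $L=\NN_0^d$ follows by summing the geometric series under the integral. This is a complete and essentially standard proof.
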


Sheil-Small used the Hadamard product to 
study coefficient problems for univalent functions.
Thus, many of the main applications of \cite{sheilsmall}
are subsumed by de Branges's establishment of the
Bieberbach conjecture \cite{deBranges}.

Some basic examples of preservers of the Schur class are
\[
F_{\zeta}(z) = \frac{1}{\prod_{j=1}^{d} (1-z_j \zeta_j)}
\]
for $\zeta \in \overline{\D}^d$.
Fej\'er means (or Cesaro sums) of Schur class functions
can be viewed as a Hadamard product and it is well-known
such functions preserve the Schur class.
A perhaps more interesting class of examples of $F$ preserving $\mathcal{S}_d$
is given by taking $b \in \mathcal{S}_d$ with $b(0)=0$
and $F(z) = \frac{1}{1-b(z)}$. This is (2.6) of \cite{sheilsmall}.
This includes the ``diagonal extraction'' multiplier $\frac{1}{1-z_1\cdots z_d}$.

In this paper we investigate the coefficient
multipliers of an important class of analytic
functions called the Schur-Agler class---we shall write
Agler class for short.

\begin{definition}[\textbf{Agler class}]
\label{Agler}
Let $d\in \mathbb{N}$ and let \(f: \D^d \rightarrow \overline\D\) be an analytic function. 
Then we say \(f\) is in the \emph{Agler class} if for every \(d\)-tuple \(T=(T_1,...,T_d)\) of commuting, strictly contractive operators on a Hilbert space, we have \(\|f(T)\| \leq 1\). 
We use \(\mathcal{A}_d\) to denote the \emph{Agler class}.
\end{definition}

Since the operators in $T$ above are strictly contractive, $f(T)$ can be 
defined as an absolutely convergent power series.  
Membership in the Agler class can be tested using $d$-tuples of commuting
strictly contractive \emph{matrices} instead of operators on infinite dimensional
Hilbert spaces.  
This is proven as part of a more general fact in
\cite{polyhedraAMY} (Theorem 6.1)---in particular one only needs to
check over the class of simultaneously diagonalizable $d$-tuples of commuting
strictly contractive matrices where the joint eigenspaces have dimension $1$ 
(these are called ``generic'').

We shall also use $\mathcal{A}_d(L)$ to denote those
elements of $\mathcal{A}_d$ whose power series are supported on $L \subset \mathbb{N}_0^d$.
Von Neumann's inequality \cite{vN} and And\^{o}'s inequality \cite{ando}
prove that $\mathcal{A}_1=\mathcal{S}_1$ and $\mathcal{A}_2=\mathcal{S}_2$.
On the other hand, work of Varopoulos \cite{varo}
shows that $\mathcal{A}_d \ne \mathcal{S}_d$ for $d>2$.
The Agler class is of interest both as a vehicle to study why and how the multivariable
von Neumann inequality fails but also because Agler class functions exhibit many
interesting properties.  In particular, Agler class functions have a special
decomposition into positive semi-definite kernel functions called an Agler
decomposition (that certifies membership in the Agler class)
as well as a characterizing formula called a transfer function realization;
see the book \cite{AMbook}.
The current paper uses neither of these facts and instead relies
on connections to the theory of completely bounded maps.
Some interesting recent papers
about the Agler class are \cite{Barik},  \cite{Bhowmik}, \cite{Debnath}.

A natural problem is to characterize the $F$ 
that preserve the Agler class $\mathcal{A}_d(L)$.
A first step is to notice that Schur class
preservers are Agler class preservers.

\begin{theorem} \label{SAthm}
Every $F$ which preserves $\mathcal{S}_d(L)$ under Hadamard product
necessarily preserves $\mathcal{A}_d(L)$.
\end{theorem}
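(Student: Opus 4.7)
The plan is to apply Sheil-Small's theorem to reduce $F$ to an integral representation against a measure on $\mathbb{T}^d$, then integrate the corresponding rotation action of $f$ on commuting tuples.

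More explicitly, suppose $F$ preserves $\mathcal{S}_d(L)$. By Sheil-Small's theorem, there is a complex measure $\mu$ on $\mathbb{T}^d$ with $\|\mu\| \le 1$ such that $F_\alpha = \int_{\mathbb{T}^d} w^\alpha \, d\mu(w)$ for every $\alpha \in L$. Now fix $f \in \mathcal{A}_d(L)$, write $f(z) = \sum_{\alpha \in L} f_\alpha z^\alpha$, and let $T = (T_1,\dots,T_d)$ be any commuting strictly contractive $d$-tuple. Since $f$ is bounded by $1$ on $\D^d$, Cauchy's inequality gives $|f_\alpha| \le 1$, and since each $\|T_j\| < 1$, the series $\sum_{\alpha \in L} f_\alpha T^\alpha$ converges absolutely in operator norm; the same bound with an extra factor $|w^\alpha| = 1$ lets Fubini apply for any $w \in \mathbb{T}^d$.

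The main step is then the identity
\[
(f \star F)(T) \;=\; \sum_{\alpha \in L} f_\alpha F_\alpha T^\alpha
\;=\; \sum_{\alpha \in L} f_\alpha \left( \int_{\mathbb{T}^d} w^\alpha \, d\mu(w) \right) T^\alpha
\;=\; \int_{\mathbb{T}^d} f(wT) \, d\mu(w),
\]
where $wT := (w_1 T_1, \dots, w_d T_d)$. The interchange of the (absolutely norm-convergent) sum with the finite measure $\mu$ is the routine part. The key observation is that for each $w \in \mathbb{T}^d$ the tuple $wT$ is again commuting and strictly contractive (each $\|w_j T_j\| = \|T_j\| < 1$), so by the definition of the Agler class $\|f(wT)\| \le 1$. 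Hence
\[
\|(f \star F)(T)\| \;\le\; \int_{\mathbb{T}^d} \|f(wT)\| \, d|\mu|(w) \;\le\; \|\mu\| \;\le\; 1.
\]
Taking the supremum over all such $T$ shows $f \star F \in \mathcal{A}_d$, and since $f \star F$ is evidently still supported in $L$, we conclude $f \star F \in \mathcal{A}_d(L)$.

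I do not expect a serious obstacle here; once Sheil-Small is invoked, the only thing to verify is the rotation invariance of the class of commuting strict contractions (automatic, since multiplying by unimodular scalars preserves commutativity and operator norms) together with the bookkeeping needed to move the integral through the power series, which is clean for strict contractions. The subtlety avoided is that we never need to develop an Agler decomposition or a transfer realization for $f$ itself; the rotation trick handles everything.
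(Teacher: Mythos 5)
Your argument is correct, but it takes a genuinely different route from the paper's. You invoke Sheil-Small's theorem to obtain a measure $\mu$ on $\T^d$ with $F_\alpha = \int_{\T^d} w^\alpha\,d\mu(w)$, and then verify the identity $(f\star F)(T) = \int_{\T^d} f(wT)\,d\mu(w)$, exploiting the fact that multiplying a commuting strict contraction tuple by unimodular scalars stays in that class. The paper avoids Sheil-Small entirely and uses only the hypothesized preserving property: for any commuting contractive tuple $T$ and vectors $x,y$ of norm at most one, the scalar function $h(z) = \langle f(z\odot T)x,y\rangle = \sum_{\alpha\in L} f_\alpha \langle T^\alpha x,y\rangle z^\alpha$ lies in $\mathcal{S}_d(L)$ precisely because $f\in\mathcal{A}_d$, so $h\star F\in\mathcal{S}_d(L)$ by hypothesis, and one computes $(h\star F)(z) = \langle (f\star F)(z\odot T)x,y\rangle$; taking a supremum over $x,y$ shows $(f\star F)(z\odot T)$ is contractive for all $z\in\D^d$, which is what membership in $\mathcal{A}_d(L)$ requires. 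The paper's route is more self-contained, using no characterization theorem, only the definitions; it is also the conceptual seed of Theorem \ref{basicthm} and the `if' direction of Theorem \ref{matrixthm}, and it adapts cleanly to the matrix-valued Agler class. Your route leans on the deeper input of Sheil-Small's full characterization (which is only stated for scalar-valued Schur functions), so it is less portable to the matrix setting, but it does give a pleasant one-line mechanism $\|(f\star F)(T)\| \le \int \|f(wT)\|\,d|\mu| \le \|\mu\| \le 1$ once the representation is in hand. Both are valid; what the paper buys by not invoking Sheil-Small is generality and a template that recurs later in the paper.
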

We will give a direct proof of this fact even though some later results
also prove this.  While this result is not deep, it proves some
non-obvious facts like a Bernstein inequality for $\mathcal{A}_d$.
Recall the Bernstein inequality says that for a one variable polynomial $P(z) \in \C[z]$ of degree $n$,
\[
\sup_{|z|=1}|P'(z)| \leq n \sup_{|z|=1} |P(z)|.
\]
(The same is true for Laurent polynomials of degree $n$.)
This can be viewed as saying $F(z) = \sum_{j=1}^{\infty} (j/n) z^j = \frac{z}{n(1-z)^2}$
is a coefficient multiplier of $\mathcal{S}_1(\{0, \dots, n\})$.

For a multivariable polynomial $P(z_1,\dots, z_d) \in \C[z_1,\dots, z_d]$
of multidegree $n = (n_1,\dots, n_d)$
this trivially extends in the form
\[
\sup_{\T^d} \left|\frac{\partial P}{\partial z_j} \right| \leq n_j \sup_{\T^d} |P|.
\]
By the above result applied with 
\[
L = \{ (m_1,\dots, m_d) \in \mathbb{N}_0^d: 0\leq m_j \leq n_j \text{ for } 1\leq j \leq d\}
\]
we see that if $P \in \mathcal{A}_d$ then 
\[
\frac{1}{n_j} \frac{\partial P}{\partial z_j} \in \mathcal{A}_d.
\]

It is also worth pointing out that elements of $\mathcal{S}_d$ themselves 
preserve $\mathcal{S}_d$ under $\star$ product and hence preserve the
Agler class $\mathcal{A}_d$ as well.

On the other hand, the functions that 
preserve $\mathcal{A}_d$ under $\star$ product are definitely different.

\begin{prop} \label{propexists}
For $d>2$, there exists $F$ which preserves $\mathcal{A}_d$ under Hadamard product
but does not preserve $\mathcal{S}_d$.  
\end{prop}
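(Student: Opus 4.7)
The plan is to derive $F$ from a commuting tuple of strict contractions that witnesses failure of the multivariable von Neumann inequality. By Varopoulos's result, for $d > 2$ there exist a polynomial $p \in \mathcal{S}_d$ and a commuting $d$-tuple $T = (T_1, \ldots, T_d)$ of strict contractions on a Hilbert space $H$ with $\|p(T)\| > 1$. Pick unit vectors $x, y \in H$ with $|\langle p(T) x, y\rangle| > 1$ and define
$$F(z) = \sum_{\alpha \in \mathbb{N}_0^d} \langle T^\alpha x, y\rangle \, z^\alpha,$$
which converges on a polydisk strictly larger than $\D^d$ because each $T_j$ has norm less than $1$.

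To verify that $F$ preserves $\mathcal{A}_d$, take any $f \in \mathcal{A}_d$ and any commuting strict contractions $S = (S_1, \ldots, S_d)$ on a Hilbert space $K$. The tensor operators $S_j \otimes T_j$ form a commuting tuple of strict contractions on $K \otimes H$, since commutation of the $S_i$'s together with commutation of the $T_i$'s forces commutation of the $S_i \otimes T_i$'s, while $\|S_j \otimes T_j\| = \|S_j\|\|T_j\| < 1$. Hence $\|f(S \otimes T)\| \leq 1$. A coefficient-by-coefficient comparison gives
$$(f \star F)(S) = \Phi\bigl(f(S_1 \otimes T_1, \ldots, S_d \otimes T_d)\bigr),$$
where $\Phi \colon B(K \otimes H) \to B(K)$ is the matrix-entry map determined by $\langle \Phi(A) u, w\rangle = \langle A(u \otimes x), w \otimes y\rangle$. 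Since $\|\Phi\| \leq \|x\|\|y\| = 1$, we conclude $\|(f \star F)(S)\| \leq 1$ and therefore $f \star F \in \mathcal{A}_d$.

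To see that $F$ does not preserve $\mathcal{S}_d$, apply the same coefficient identity with $f = p$ to obtain $(p \star F)(z) = \langle p(z_1 T_1, \ldots, z_d T_d) x, y\rangle$. This expression extends continuously in $z$ to the closed polydisk because $p$ is a polynomial and the $T_j$ are strict contractions. Letting $z \to (1, \ldots, 1)$ yields a value equal to $\langle p(T) x, y\rangle$, whose modulus exceeds $1$ by construction; thus $\sup_{\D^d} |p \star F| > 1$, so $p \star F \notin \mathcal{S}_d$ even though $p \in \mathcal{S}_d$.

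The main obstacle is the Agler-preservation step, whose key idea is recognizing that moments of a commuting operator tuple $T$ should play the role of Sheil-Small's moments of a measure, with the tensor construction $S \otimes T$ providing a substitute for joint spectral calculus. Once one sees that $S \otimes T$ is again a commuting tuple of strict contractions and that matrix-entry maps against unit vectors are contractive, the remainder is a routine Cauchy product manipulation.
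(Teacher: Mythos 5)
Your proof is correct and follows essentially the same approach as the paper: take a commuting tuple $T$ witnessing failure of von Neumann's inequality, build $F$ from the moments $\langle T^\alpha x, y\rangle$, show Agler-preservation via the tensor trick $S \otimes T$, and detect failure of Schur-preservation by evaluating $p \star F$ at $(1,\dots,1)$. The only differences are expository: the paper cites its Theorem \ref{basicthm} for the Agler-preservation step (whose proof is the same tensor argument you give inline) and uses Holbrook's explicit $4\times 4$ example to get a concrete $F$, whereas you invoke the abstract Varopoulos existence result; you should also note that for $d>3$ one pads a $3$-variable counterexample with extra zero operators.
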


The existence comes from counterexamples to von Neumann inequalities
in 3 or more variables.
For instance, the Holbrook example (a modification of the Kaijser-Varopoulos examples) \cite{Holbrook}
yields $F(z_1,z_2,z_3) = z_1^2+z_2^2+z_3^2 - (1/2)(z_1z_2+z_2z_3+z_1z_3)$
which preserves the Agler class but not the Schur class.
The Crabb-Davie example \cite{CrabbDavie} yields 
$F(z_1,z_2,z_3) = z_1z_2z_3 - z_1^3-z_2^3-z_3^3$
which preserves the Agler class but not the Schur class.
In this case, the polynomial from the Crabb-Davie counterexample happens to
be the same as the Hadamard multiplier function.
Just exhibiting these examples proves some operator inequalities.
For example if $a_{ij}$ are scalars such that
\[
\| a_{11} T_1^2 + a_{22} T_2^2 + a_{33} T_3^2 + a_{12} T_1T_2 + a_{23} T_2 T_3 + a_{13} T_1 T_3\| \leq 1
\]
holds for all $3$-tuples of commuting contractions $T=(T_1,T_2,T_3)$,
then 
\[
\| a_{11} T_1^2 + a_{22} T_2^2 + a_{33} T_3^2 -(1/2)(a_{12} T_1T_2 + a_{23} T_2 T_3 + a_{13} T_1 T_3)\| \leq 1
\]
for all such 3-tuples.

Our method of proving $F$ has the desired properties is from
the following observation. 

\begin{theorem}\label{basicthm}
Let $L\subset \mathbb{N}_0^d$.
For any $d$-tuple $T=(T_1,\dots, T_d)$ of commuting
contractive operators on a Hilbert space $\mathcal{H}$ and for vectors $x,y \in \mathcal{H}$ with
$|x|,|y|\leq 1$, 
the function
\[
F(z) = \langle \prod_{j=1}^{d} (1-z_j T_j)^{-1} x,y \rangle 
= \sum_{\alpha \in \mathbb{N}_0^d} \langle T^{\alpha} x,y\rangle z^{\alpha}
\]
preserves $\mathcal{A}_d(L)$ under Hadamard product.  
\end{theorem}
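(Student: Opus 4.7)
My plan is to reduce to the defining property of the Agler class via a single tensor product construction. Fix $f \in \mathcal{A}_d(L)$ and fix an arbitrary $d$-tuple $S = (S_1,\dots, S_d)$ of commuting strictly contractive operators on a Hilbert space $\mathcal{K}$; my goal is to show $\|(f\star F)(S)\| \leq 1$.

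The key device is to pass to the operators $R_j := T_j \otimes S_j$ acting on $\mathcal{H} \otimes \mathcal{K}$. Since the $T_i$ commute amongst themselves and likewise the $S_i$, the $R_j$ form a commuting tuple, and $\|R_j\| \leq \|T_j\|\|S_j\| < 1$ because $T$ is contractive and $S$ is strictly contractive. Hence $R = (R_1,\dots, R_d)$ is a commuting tuple of strictly contractive operators. Because $R^\alpha = T^\alpha \otimes S^\alpha$, the Agler hypothesis applied to $f$ at $R$ gives $f(R) = \sum_\alpha f_\alpha \, T^\alpha \otimes S^\alpha$ with $\|f(R)\| \leq 1$.

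The connection to $f \star F$ is then a one-line calculation: for any $v,w \in \mathcal{K}$,
\[
\langle (f\star F)(S) v,w\rangle = \sum_{\alpha} f_\alpha \langle T^\alpha x,y\rangle \langle S^\alpha v,w\rangle = \langle f(R)(x \otimes v),\, y \otimes w\rangle.
\]
Combining $\|f(R)\|\leq 1$ with $\|x \otimes v\|\leq \|v\|$ and $\|y \otimes w\|\leq \|w\|$ (using $\|x\|,\|y\|\leq 1$) yields $|\langle (f\star F)(S)v,w\rangle| \leq \|v\|\|w\|$, whence $\|(f\star F)(S)\|\leq 1$. Since $f$ is supported on $L$, the Hadamard product $f\star F$ is supported on $L$ as well, so $f\star F \in \mathcal{A}_d(L)$.

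The only conceptual step is spotting the tensor product $T_j \otimes S_j$ at the outset; once that is in hand, the rest is a formal manipulation. A minor technical point worth addressing is convergence of $\sum_\alpha f_\alpha T^\alpha \otimes S^\alpha$ at the commuting strictly contractive tuple $R$, but this is standard, either via the absolute convergence of $f$ on a slightly larger polydisk containing the joint spectrum or by passing through finite-dimensional compressions as in the matricial reformulation of the Agler class.
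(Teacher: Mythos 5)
Your proof is correct and uses essentially the same approach as the paper: the paper proves Theorem \ref{basicthm} as the scalar special case of the ``if'' direction of Theorem \ref{matrixthm}, and the key move there is exactly your tensor-product device $R_j = T_j \otimes S_j$ followed by the identity $\langle (f\star F)(S)v,w\rangle = \langle f(T\otimes S)(x\otimes v), y\otimes w\rangle$. The only difference is that the paper carries matrix-valued $f$ through the argument to obtain the stronger matrix Agler class statement, whereas you treat only the scalar case, which is all Theorem \ref{basicthm} requires.
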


If we let $\mu$ be a complex Borel measure on $\T^d$ with total variation $\|\mu\|\leq 1$
then multiplication $M_{j}$ by $z_j$ on $L^2(|\mu|)$ is a contractive operator.
So the $d$-tuple $M = (M_1,\dots, M_d)$ is a $d$-tuple of commuting contractions and
we have $\langle M^{\alpha} \frac{d\mu}{d|\mu|}, 1\rangle = \int_{\T^d} z^{\alpha} d\mu$
which recovers all of the coefficients from Sheil-Small's theorem.
This separately proves that a Schur class preserver is an Agler class preserver.

The functions $F$ in Theorem \ref{basicthm} have a stronger property;
they actually preserve matrix-valued Agler classes.
If $f:\D^d \to \C^{M \times N}$ is analytic and $f(z) = \sum_{\alpha} f_{\alpha} z^{\alpha}$
we say $f$ is in the matrix Agler class if for every commuting $d$-tuple $T$
of strictly contractive operators we have
\[
\left\| \sum_{\alpha\in \mathbb{N}_0^d} T^{\alpha} \otimes f_{\alpha}  \right\| \leq 1
\]
where $T^{\alpha} \otimes f_{\alpha}$ is the tensor
product of operators (on Hilbert spaces).
Once we notice this, we are led to a characterization.
We let $\mathcal{MA}_d$ denote the matrix Agler class,
and $\mathcal{MA}_d(L)$ denotes those elements with 
power series supported on the set $L\subseteq \mathbb{N}_0^d$.

\begin{theorem} \label{matrixthm}
Let $L\subseteq \mathbb{N}_0^d$.
Given $F(z) = \sum_{\alpha} F_{\alpha} z^{\alpha}$ 
analytic on $\D^d$, $F$ preserves $\mathcal{MA}_d(L)$ under Hadamard product 
if and only if
there exists a $d$-tuple $T=(T_1,\dots, T_d)$ of commuting
contractive operators on a Hilbert space $\mathcal{H}$ and vectors $x,y \in \mathcal{H}$
with $|x|,|y|\leq 1$
such that for $\alpha \in L$
\[
F_{\alpha} = \langle T^{\alpha} x,y\rangle.
\]
\end{theorem}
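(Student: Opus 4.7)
The two implications need quite different treatments. For the ``if'' direction, I would extend the tensor-product trick underlying Theorem~\ref{basicthm} to matrix-valued test functions: given $f \in \mathcal{MA}_d(L)$ with coefficients $f_\alpha \in \C^{M\times N}$ and a commuting strict contractive tuple $S$ on $\mathcal{K}$, the tuple $(T_j \otimes S_j)_j$ on $\mathcal{H}\otimes\mathcal{K}$ is itself commuting and strictly contractive, so the matrix Agler hypothesis applied to $f$ gives $\|\sum_\alpha T^\alpha \otimes S^\alpha \otimes f_\alpha\| \le 1$. Sandwiching in the $\mathcal{H}$-factor by the norm-at-most-one maps $\xi \mapsto x \otimes \xi$ and $\eta \mapsto y \otimes \eta$ collapses the $\mathcal{H}$-slot via $\langle T^\alpha x, y\rangle = F_\alpha$, producing $\sum_\alpha F_\alpha\, S^\alpha \otimes f_\alpha$ with norm at most one. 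This shows $F \star f \in \mathcal{MA}_d(L)$.

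For the ``only if'' direction I would use a Hahn--Banach separation. Let $\mathcal{M} \subset \C^L$ denote the set of moment sequences $(\langle T^\alpha x, y\rangle)_\alpha$ arising from admissible data; it is convex and balanced via direct sums and vector scaling, and closed on each finite truncation $L' \subset L$ by a standard ultraproduct of Hilbert spaces and tuples (commuting contractions and inner products pass cleanly to ultralimits). Assume for contradiction that $F|_{L'} \notin \mathcal{M}|_{L'}$. Hahn--Banach yields a functional $\ell(G) = \sum_{\alpha \in L'} \overline{\ell_\alpha}\, G_\alpha$ with $|\ell| \le 1$ on $\mathcal{M}|_{L'}$ but $|\ell(F)| > 1$. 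Writing $\ell^*(z) = \sum_{\alpha \in L'} \overline{\ell_\alpha}\, z^\alpha$, the first condition reads
\[
\|\ell^*\|_{\mathcal{A}_d} \;=\; \sup_{T,\,\|x\|,\|y\|\le 1} |\langle \ell^*(T) x, y\rangle| \;\le\; 1,
\]
so $\ell^* \in \mathcal{A}_d(L)$. Since preservation of $\mathcal{MA}_d(L)$ trivially implies preservation of $\mathcal{A}_d(L)$, $F \star \ell^*$ is a polynomial in $\mathcal{A}_d(L)$ with norm at most one. Because $\|\cdot\|_{\mathcal{A}_d}$ dominates $\sup_{\T^d}|\cdot|$ on polynomials (test against scalar tuples $T_j = z_j I$), evaluation at $(1,\dots,1) \in \T^d$ gives $|\ell(F)| = |(F\star \ell^*)(1,\dots,1)| \le 1$, a contradiction. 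When $L$ is infinite one produces a single realization from the finite-$L'$ ones by a further ultraproduct over the directed family of finite subsets.

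The main technical obstacle I foresee is the ultraproduct bookkeeping---both the closedness of $\mathcal{M}|_{L'}$ and the assembly step for infinite $L$---which is routine in operator theory but needs to be stated carefully so that the resulting commuting contractive tuple really does realize the moments. A pleasant byproduct is that this argument uses only scalar preservation of $\mathcal{A}_d(L)$, so combined with Theorem~\ref{basicthm} it yields the stronger trichotomy: preservation of $\mathcal{A}_d(L)$, preservation of $\mathcal{MA}_d(L)$, and the moment representation are all equivalent.
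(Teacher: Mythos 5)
Your ``if'' direction is the same tensor-product compression argument the paper uses. Your ``only if'' direction is a genuinely different and, as far as I can tell, correct route. The paper realizes $\mathcal{P}_L$ inside the $C^*$-algebra generated by a universal commuting contractive tuple, observes that $P\mapsto P\star F$ is completely bounded of cb-norm $\le 1$ (this is exactly where the matrix Agler hypothesis enters), extends by Wittstock, and reads off $F_\alpha=\langle S^\alpha V_2e,V_1e\rangle$ from the Stinespring form $V_1^*\pi(\cdot)V_2$ together with a joint fixed unit vector $e$. You instead separate $F|_{L'}$ from the closed convex balanced moment body $\mathcal{M}|_{L'}$ by Hahn--Banach, identify the separating functional with a polynomial $\ell^*$ of Agler norm at most one, and derive the contradiction $|\ell(F)|=|(F\star\ell^*)(1,\dots,1)|\le 1$; the closedness of $\mathcal{M}|_{L'}$ and the passage from finite $L'$ to all of $L$ via an ultraproduct over the directed family of finite subsets are routine and do go through as you describe. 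The trade-off is that Wittstock/Stinespring deliver the realization in a single step and make the role of complete boundedness transparent, whereas your argument avoids that machinery at the cost of an ultraproduct limit. The byproduct you flag is genuine and formally stronger than what the paper records: your separation tests $F$ only against the scalar polynomial $\ell^*$, so preservation of the scalar class $\mathcal{A}_d(L)$ already forces the moment representation, while the paper's proof needs matrix preservation to obtain complete boundedness. Together with the ``if'' direction this upgrades the theorem to the equivalence of preservation of $\mathcal{A}_d(L)$, preservation of $\mathcal{MA}_d(L)$, and the existence of a moment representation.
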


The proof is a straighforward application of the Stinespring representation
theorem and the Wittstock extension theorem as presented in 
Paulsen's book \cite{Paulsenbook}.

A simple corollary is now possible.
If $F(z) = \sum_{\alpha} F_{\alpha} z^{\alpha}$ 
preserves $\mathcal{MA}_{d}$, then for any $\beta \in \mathbb{N}_0^d$
so does $G(z)  = \sum_{\alpha} F_{\alpha+\beta} z^{\alpha}$.
Indeed, representing $F_{\alpha} = \langle T^{\alpha} x,y\rangle$ as in the theorem
we have $F_{\alpha+\beta} = \langle T^{\alpha} T^{\beta}x,y\rangle$.
This is an analogue of (2.1) from \cite{sheilsmall}.

The remainder of the paper consists of proofs of the theorems listed above.

\section{Proofs}

\begin{proof}[Proof of Theorem \ref{SAthm}]
Suppose $F$ preserves $\mathcal{S}_d(L)$ under the Hadamard product.
Let $f \in \mathcal{A}_d(L)$.  We need to show $f \star F \in \mathcal{A}_d(L)$.
Now, given a $d$-tuple $T$ of contractive commuting operators 
on a Hilbert space $\mathcal{H}$ and vectors $x,y \in \mathcal{H}$ of norm at most $1$,
we set $z \odot T = (z_1T_1,\dots, z_d T_d)$ and have that
\[
h(z) := \langle f(z\odot T) x,y \rangle  = \sum_{\alpha \in L} f_{\alpha} \langle T^\alpha x,y\rangle z^{\alpha}
\]
belongs to the Schur class.
Therefore, $h \star F$ belongs to the Schur class.
However, 
\[
(h\star F)(z) = \sum_{\alpha \in L} f_{\alpha} F_{\alpha} \langle T^\alpha x,y\rangle z^{\alpha}
 = \langle (f\star F)(z\odot T)x,y\rangle.
\]
Since this holds for all unit vectors $x,y$, we see $(f\star F)(z\odot T)$ is contractive
and since $T$ is an arbitrary $d$-tuple of commuting contractive operators 
we see that $f\star F \in \mathcal{A}_d(L)$.
\end{proof}

\begin{proof}[Proof of Proposition \ref{propexists}]
For concreteness we refer to the minimal counter-example of the von Neumann
inequality due to Holbrook \cite{Holbrook} (it is a modification of the
example of Kaijser-Varopoulos--- appendix of \cite{varo}).
The operator tuple $T=(T_1,T_2,T_3)$ on $\C^4$ is
built by taking orthonormal vectors $e,f$ and an orthogonal
subspace $G= \text{span}\{g_1,g_2,g_3\}$ where 
$g_1, g_2, g_3$ are unit vectors satisfying $\langle g_i, g_j \rangle = -1/2$
for $i\ne j$.  We can take vectors congruent to the three vectors 
\[
(1,0), (-1/2, \sqrt{3}/2), (-1/2, -\sqrt{3}/2).
\]
Now, for $j=1,2,3$,  $T_j e = g_j$, $T_j f = 0$, 
\[
T_j g_i = \begin{cases} f & \text{ if } j=i \\
\frac{-1}{2}f & \text{ if } j\ne i. \end{cases}
\]
The tuple $T=(T_1,T_2,T_3)$ is well-defined, pairwise commuting, contractive, and satisfies the
relations
\[
\langle T_i e,f\rangle = 0 \text{ for } j =1,2,3
\]
\[
\langle T_i^2 e, f \rangle = 1 \text{ for } j =1,2,3
\]
\[
\langle T_i T_j e, f \rangle = -1/2 \text{ for } i\ne j
\]
and
\[
\langle T_{\alpha} e, f \rangle = 0 \text{ for } |\alpha| >2.
\]
Thus, 
\[
F(z) = \sum_{\alpha} \langle T^{\alpha} e,f\rangle z^\alpha = 
z_1^2+z_2^2+z_3^2 -(1/2)(z_1z_2 + z_1z_3+ z_2z_3).
\]

Holbrook proves that for
$P(z_1,z_2,z_3) = z_1^2+z_2^2+z_3^2 - 2(z_1z_2+z_2z_3+z_1z_3)$ we have
\[
\|P(T)\| = \langle P(T) e,f\rangle = 6 > \sup_{z\in \T^d} |P(z)|.
\]
If $L_P =\text{supp}(P) = \{(2,0,0),(0,2,0),(0,0,2),(1,1,0),(1,0,1),(0,1,1)\}$
then 
\[
(P \star F)(1, 1, 1) = \langle P(T) e,f\rangle
\]
shows that $P/\|P\|_{\infty} \in \mathcal{S}_3(L_P)$ but $(P/\|P\|_{\infty}) \star F \notin \mathcal{S}_3$.
Theorem \ref{basicthm}, discussed momentarily, proves that $F$ preserves $\mathcal{A}_3(L)$ for every $L$.
Thus, we have produced an Agler class preserver that does not preserve the Schur class.
\end{proof}

\begin{example}
Similarly, using the counterexample from \cite{CrabbDavie}, 
we obtain a triple $T=(T_1,T_2,T_3)$ of commuting contractive matrices operating 
on $\C^8$ with orthogonal unit vectors $x,y \in \C^8$ (if referring to the paper \cite{CrabbDavie} we take $x=e$ and $y =h$)
such that
\[
\langle T_i x,y \rangle  = 0, \text{ for } i=1,2,3
\]
\[
 \langle T_iT_j x,y \rangle  = 0,  \text { for } i,j = 1,2,3
 \]
 \[
\langle T_j T_i^2 x,y \rangle = -\delta_{ij}, \text{ for } i,j= 1,2,3
\]
\[
\langle T_1T_2T_3 x,y \rangle = 1, \text{ and } \langle T^{\alpha} x,y \rangle = 0 \text{ for } |\alpha| >3.
\]
So we obtain the Agler class Hadamard multiplier
\[
F(z_1,z_2,z_3) = z_1z_2z_3 - z_1^3-z_2^3-z_3^3. \quad \diamond
\]
\end{example}

Theorem \ref{basicthm} is a special case of the `if' direction of Theorem \ref{matrixthm}.
To prove this we make use of the standard tensor product of Hilbert spaces
and the tensor product of bounded operators on Hilbert space.
The main fact we need is that the tensor product of contractive operators is 
contractive.

\begin{proof}[Proof of `if' direction of Theorem \ref{matrixthm}]
To be precise suppose we have
a $d$-tuple $T=(T_1,\dots, T_d)$ of commuting
contractive operators on a Hilbert space $\mathcal{H}$ and vectors $x,y \in \mathcal{H}$
with $|x|,|y|\leq 1$
where we define 
\[
F(z) = \langle \prod_{j=1}^{d} (1-z_j T_j)^{-1} x,y \rangle = \sum_{\alpha\in \mathbb{N}_0^d} \langle T^{\alpha} x,y\rangle z^{\alpha}.
\]
Let $L \subset \mathbb{N}_0^d$ and suppose
$f:\D^d \to \C^{M\times N}$ is analytic, $f(z) = \sum_{\alpha\in L} f_{\alpha} z^{\alpha}$ 
(here each $f_\alpha \in \C^{M\times N}$), and belongs to the matrix Agler class.

We must show 
\[
(f \star F)(z) = \sum_{\alpha \in L} f_{\alpha} \langle T^{\alpha} x,y\rangle z^{\alpha}
\]
belongs to the matrix Agler class. The above power series is absolutely
convergent for each $z \in \D^d$ since $|\langle T^{\alpha} x,y\rangle| \leq 1$.
So, let $S=(S_1,\dots, S_d)$ be a $d$-tuple of commuting strict contractions
on a Hilbert space $\mathcal{K}$ and consider
\[
(f\star F)(S) =   \sum_{\alpha \in L} \langle T^{\alpha} x,y\rangle S^{\alpha}\otimes f_{\alpha} 
\]
which converges absolutely.  
For any unit vectors $v\in \mathcal{K} \otimes \C^{N}$, $w \in \mathcal{K}\otimes \C^{M}$, 
\[
\langle (f\star F)(S)v,w\rangle = 
 \sum_{\alpha \in L} \langle T^{\alpha} x,y\rangle \langle (S^{\alpha} \otimes f_{\alpha})v,w\rangle
\]
but
\[
\langle T^{\alpha} x,y\rangle \langle (S^{\alpha} \otimes f_{\alpha})v,w\rangle
 = \langle ((T\otimes S)^{\alpha} \otimes f_{\alpha} ) (x\otimes v), (y\otimes w)\rangle 
\]
where $T\otimes S = (T_1\otimes S_1,\dots, T_d\otimes S_d)$ 
is a commuting $d$-tuple of strict contractions on $\mathcal{K}\otimes \mathcal{H}$
because $\|T_j\otimes S_j\| \leq \|T_j\| \|S_j\| <1$ for each $j$.
We also view $x\otimes v$ as an element of $\mathcal{H}\otimes \mathcal{K}\otimes \C^N$
and $y\otimes w$ as an element of $\mathcal{H}\otimes \mathcal{K}\otimes \C^M$.
So,
\[
\langle (f\star F)(S)v,w\rangle 
= \langle (\sum_{\alpha \in L} (T\otimes S)^{\alpha} \otimes f_{\alpha})(x\otimes v), (y\otimes w)\rangle
=
\langle (f(T\otimes S)(x\otimes v), (y\otimes w)\rangle.
\]
Thus, since $f$ belongs to the matrix Agler class, $f(T\otimes S)$ is a contraction
so that $(f\star F)(S)$ is a contraction.  This proves $F$ preserves
the matrix Agler classes.
\end{proof}

To prove the other direction of Theorem \ref{matrixthm} we use
the Wittstock extension theorem \cite{Wittstock84} as stated in \cite{Paulsenbook} (Theorem 8.2)
as well as the completely bounded version of the Stinespring's representation theorem 
(\cite{Paulsenbook} Theorem 8.4).

\begin{theorem}[Wittstock's extension theorem] \label{wittstock}
Let $\mathcal{A}$ be a unital $C^*$-algebra, $\mathcal{M}$ a subspace of $\mathcal{A}$,
and let $\phi: \mathcal{M} \to B(\mathcal{H})$ be completely bounded.
Then, there exists a completely bounded map $\psi: \mathcal{A} \to B(\mathcal{H})$
that extends $\phi$, with $\|\phi\|_{cb}=\|\psi\|_{cb}$.
\end{theorem}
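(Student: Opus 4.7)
The plan is to deduce Wittstock's extension theorem from Arveson's extension theorem for completely positive maps via the Paulsen off-diagonal trick. After rescaling we may assume $\|\phi\|_{cb} = 1$. The overall strategy is to associate to the completely bounded map $\phi$ a completely positive map on a larger operator system sitting inside $M_2(\mathcal{A})$, apply Arveson's theorem to that CP map, and then read off the required extension from a corner of the CP extension.

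Concretely, the first step is to form the operator system
\[
\mathcal{S} = \left\{ \begin{pmatrix} \lambda 1_{\mathcal{A}} & m_1 \\ m_2^* & \mu 1_{\mathcal{A}} \end{pmatrix} : m_1, m_2 \in \mathcal{M},\ \lambda,\mu \in \C \right\} \subset M_2(\mathcal{A})
\]
and define $\Phi: \mathcal{S} \to B(\mathcal{H}\oplus \mathcal{H})$ by
\[
\Phi\begin{pmatrix} \lambda \cdot 1 & m_1 \\ m_2^* & \mu \cdot 1 \end{pmatrix} = \begin{pmatrix} \lambda I_{\mathcal{H}} & \phi(m_1) \\ \phi(m_2)^* & \mu I_{\mathcal{H}} \end{pmatrix}.
\]
One checks that $\mathcal{S}$ is self-adjoint and unital, so it is indeed an operator system, and that $\Phi$ is unital and self-adjoint. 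The next step is to verify that $\Phi$ is completely positive. This amounts to checking that $\Phi^{(n)}$ sends positive elements of $M_n(\mathcal{S})$ to positive elements of $M_{2n}(B(\mathcal{H}))$, and via the standard characterization of positivity of $2\times 2$ operator matrices $\begin{pmatrix} A & B \\ B^* & C \end{pmatrix}$ with $A,C\geq 0$ (positive if and only if $B = A^{1/2} K C^{1/2}$ for some contraction $K$), this reduces to the inequality $\|\phi_n(X)\| \leq \|X\|$ for $X \in M_n(\mathcal{M})$, which is exactly the assumption $\|\phi\|_{cb}\leq 1$.

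Once complete positivity of $\Phi$ is established, Arveson's extension theorem produces a unital completely positive extension $\tilde\Phi: M_2(\mathcal{A}) \to B(\mathcal{H}\oplus \mathcal{H})$. The desired map $\psi: \mathcal{A}\to B(\mathcal{H})$ is then extracted from the $(1,2)$-corner via
\[
\psi(a) = P_1^*\, \tilde\Phi\begin{pmatrix} 0 & a \\ 0 & 0 \end{pmatrix} P_2,
\]
where $P_j: \mathcal{H}\to \mathcal{H}\oplus\mathcal{H}$ are the coordinate inclusions. Restricting to $\mathcal{M}$ recovers $\phi$ by construction, and the bound $\|\psi\|_{cb}\leq 1$ follows from the fact that corner compressions of unital completely positive maps are completely contractive (a one-line application of the $2\times 2$ positivity criterion again). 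The reverse inequality $\|\psi\|_{cb}\geq \|\phi\|_{cb}$ is automatic since $\psi$ extends $\phi$, so $\|\psi\|_{cb} = \|\phi\|_{cb}$ as required.

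The main obstacle is the complete positivity verification in the middle step: one must translate an $M_n$-norm bound on $\phi$ into positivity of a $2n\times 2n$ block operator matrix, taking care of the scalar-diagonal part of $\mathcal{S}$ and keeping the bookkeeping of $n$-fold amplifications straight. Once that is in hand, the application of Arveson and the extraction of $\psi$ from the corner are routine.
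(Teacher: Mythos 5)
The paper does not prove this statement; it quotes Wittstock's theorem verbatim from Paulsen's book (Theorem 8.2) and cites it as a known tool, so there is no ``paper's own proof'' to compare against. Your argument is a correct rendering of the standard proof that Paulsen himself gives: the off-diagonal operator system $\mathcal{S}\subset M_2(\mathcal{A})$, the equivalence between complete contractivity of $\phi$ and complete positivity of the associated $\Phi$, Arveson's extension theorem applied to $\Phi$, and extraction of $\psi$ from the $(1,2)$-corner with the $2\times 2$ positivity criterion giving $\|\psi\|_{cb}\le 1$. All of these steps are sound, and the observation that $\|\psi\|_{cb}\ge\|\phi\|_{cb}$ is automatic for an extension closes the argument. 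In short, you have reproduced the source proof that the paper deliberately black-boxes.
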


\begin{theorem}[Stinespring representation for completely bounded maps] \label{stinespring}
Let $\mathcal{A}$ be a $C^*$-algebra with unit, and let $\phi:\mathcal{A}\to B(\mathcal{H})$
be a completely bounded map. Then there exists a Hilbert space $\mathcal{K}$,
a $*$-homomorphism $\pi: \mathcal{A} \to B(\mathcal{K})$, and bounded operators
$V_i: \mathcal{H}\to \mathcal{K}$, $i=1,2$, with $\|\phi\|_{cb} = \|V_1\|\|V_2\|$
such that
\[
\phi(a) = V_1^* \pi(a) V_2
\]
for all $a\in \mathcal{A}$. Moreover, if $\|\phi\|_{cb}=1$, then $V_1,V_2$ may
be taken to be isometries.
\end{theorem}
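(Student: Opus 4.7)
The plan is to derive this from the classical (completely positive) Stinespring theorem by way of Paulsen's $2\times 2$ off-diagonal trick, together with Wittstock's extension theorem (Theorem \ref{wittstock}). After rescaling so that $\|\phi\|_{cb}=1$ (the general case then follows by absorbing a scalar into $V_2$), the goal is to manufacture a unital completely positive dilation of $\phi$ living on the larger $C^*$-algebra $M_2(\mathcal{A})$ and apply the classical CP Stinespring theorem to that dilation.

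Concretely, I would introduce the operator system
\[
\mathcal{S} = \left\{ \begin{pmatrix} \lambda 1_{\mathcal{A}} & a \\ b & \mu 1_{\mathcal{A}} \end{pmatrix} : \lambda,\mu \in \C,\ a,b \in \mathcal{A} \right\} \subset M_2(\mathcal{A})
\]
and the self-adjoint unital map $\Phi : \mathcal{S} \to B(\mathcal{H}\oplus\mathcal{H})$ defined by
\[
\Phi\left(\begin{pmatrix} \lambda 1 & a \\ b & \mu 1 \end{pmatrix}\right) = \begin{pmatrix} \lambda I & \phi(a) \\ \phi(b^*)^* & \mu I \end{pmatrix}.
\]
The central fact to verify is that $\Phi$ is completely positive: the standard characterization that a $2\times 2$ block matrix with scalar-identity diagonal entries is positive iff a Schwarz-type inequality holds on the off-diagonal block is transferred by $\phi$ exactly when $\|\phi\|_{cb}\leq 1$. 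Wittstock's Theorem \ref{wittstock}, applied to the completely contractive map $\Phi$, then extends it to a map $\tilde\Phi : M_2(\mathcal{A}) \to B(\mathcal{H}\oplus\mathcal{H})$ of the same CB-norm; since the extension remains unital it is again UCP.

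Applying the classical CP Stinespring theorem to $\tilde\Phi$ yields a Hilbert space $\mathcal{K}'$, a unital $*$-homomorphism $\Pi : M_2(\mathcal{A}) \to B(\mathcal{K}')$, and an isometry $W : \mathcal{H}\oplus \mathcal{H} \to \mathcal{K}'$ with $\tilde\Phi(X) = W^*\Pi(X) W$. Writing $W$ columnwise as $(V_1, V_2)$ with $V_i : \mathcal{H} \to \mathcal{K}'$, the identity $\tilde\Phi(E_{ii}) = \iota_i \iota_i^*$ (where $\iota_i : \mathcal{H}\to \mathcal{H}\oplus\mathcal{H}$ is the $i$th inclusion) forces each $V_i$ to be an isometry into $P_i \mathcal{K}'$, where $P_i := \Pi(E_{ii})$. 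Setting $\mathcal{K} := P_1\mathcal{K}'$, extracting the unitary $U := \Pi(E_{12})|_{P_2\mathcal{K}'} : P_2\mathcal{K}' \to \mathcal{K}$ from the matrix-unit relations, and defining $\pi(a) := \Pi(E_{11}\otimes a)|_{\mathcal{K}}$ produces a $*$-homomorphism $\pi : \mathcal{A}\to B(\mathcal{K})$. Reading off the $(1,2)$-block of $\tilde\Phi$ using the factorization $E_{12}\otimes a = (E_{11}\otimes a)(E_{12}\otimes 1)$ then gives $\phi(a) = V_1^* \pi(a) (U V_2)$, with $V_1$ and $U V_2$ both isometries $\mathcal{H}\to \mathcal{K}$; this matches the theorem's conclusion after relabeling.

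The step I expect to be most delicate is verifying that $\Phi$ is completely positive, not merely positive. At level $n$ this requires viewing a positive element of $M_n(\mathcal{S})$ as a $2\times 2$ block matrix sitting inside $M_{2n}(\mathcal{A})$ whose two diagonal blocks are scalar matrices times $1_{\mathcal{A}}$, converting its positivity into an operator Schwarz-type estimate on the off-diagonal block, and checking that this estimate survives the application of $\phi_n$ — precisely the step at which the complete contractivity hypothesis is consumed. Everything else in the argument is formal manipulation with the matrix units in $M_2(\C)\subset M_2(\mathcal{A})$.
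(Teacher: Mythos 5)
The paper does not prove this statement at all: it is quoted verbatim from Paulsen's book (Theorem 8.4 there, cited as such just above the proof of the `only if' direction of Theorem \ref{matrixthm}) and is used as a black box. So there is no in-paper proof to compare against; what can be said is that your blind proof is essentially the standard one Paulsen gives, and it is correct. You reduce to $\|\phi\|_{cb}=1$, form the Paulsen operator system $\mathcal{S}\subset M_2(\mathcal{A})$, observe (Paulsen's Lemma 8.1) that the off-diagonal map $\Phi$ is unital and completely positive precisely because $\phi$ is completely contractive, extend to $M_2(\mathcal{A})$, apply classical (CP) Stinespring, and then use the matrix-unit relations to compress to $\mathcal{K}=P_1\mathcal{K}'$, producing $\pi$ and the isometries $V_1$ and $UV_2$. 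Your bookkeeping ($V_j^*P_iV_j=\delta_{ij}I$ forcing $P_iV_j=0$ for $i\neq j$; the unitarity of $U=\Pi(E_{12})|_{P_2\mathcal{K}'}$; the factorization $E_{12}\otimes a=(E_{11}\otimes a)(E_{12}\otimes 1)$) all checks out. The one deviation worth flagging: Paulsen extends $\Phi$ from $\mathcal{S}$ to $M_2(\mathcal{A})$ with Arveson's extension theorem for completely positive maps, whereas you invoke Wittstock (the paper's Theorem \ref{wittstock}) to get a completely contractive extension and then note it is unital, hence UCP. This is logically sound and has the small advantage of reusing a theorem the paper already states, but it is a detour --- Wittstock is itself proved by the same $2\times 2$ off-diagonal trick you are already running, so Arveson is the more economical choice.
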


We refer the reader to \cite{Paulsenbook} for definitions of all of the concepts
above.  

In order to apply the above theorems, we need to 
build an appropriate $C^*$-algebra.  The following
is a standard construction. 
All that needs to be
done is build an operator $d$-tuple $T$ (of commuting contractive
operators) that is
universal for all matrix $d$-tuples in the sense
that 
for any matrix valued polynomial $P \in \C^{M_1\times M_2}[z_1,\dots, z_d]$
and any $d$-tuple $S$ of commuting contractive matrices we have 
\[
\|P(T)\| \geq \|P(S)\|.
\]
As before, $P(T)$ is defined as $\sum_{\alpha} T^\alpha \otimes P_{\alpha}$
for $P(z) = \sum_{\alpha} P_\alpha z^{\alpha}$.
Once we have $T=(T_1,\dots, T_d)$ built then our $C^*$-algebra is simply
the $C^*$-algebra $\mathcal{A}$ generated by $T_1,\dots, T_d$.
For each size $N$ we can find a countable dense subset of
the collection of $d$-tuples of commuting contractive $N\times N$ matrices.
Taking a countable direct sum of these $d$-tuples varying over all $N = 1, 2,\dots$
we can obtain a single operator $T$ on a separable Hilbert space $\mathcal{H}$
with the property we need.
It is helpful to build $T$ so that there exists a unit vector $e \in \mathcal{H}$
so that 
\begin{equation} \label{evector}
T_je = e \text{ for all } j=1,\dots, d.
\end{equation}  
Namely, the first $d$-tuple
in our direct sum construction is the all ones $d$-tuple.
This gives $\langle T^{\alpha} e, e\rangle = 1$ which is 
convenient later.  

Now, the map $\C[z_1,\dots, z_d] \to B(\mathcal{H})$
given by $p\mapsto p(T)$ is a unital homomorphism
and we can view the range as equipped with a family of
matrix norms that allow us to identify $\C[z_1,\dots, z_d]$ 
with a subspace of $\mathcal{A}$.  
The associated norm is often called the Agler norm:
\[
\|P\|_{\mathcal{A}_d} := \|P(T)\|
\]
defined for matrix polynomials $P(z) \in \C^{M_1\times M_2}[z_1,\dots, z_d]$.
Again we emphasize that $\|P(T)\|$ is equal to the supremum of $\|P(S)\|$
where $S$ varies over all $d$-tuples of commuting contractive operators
or simply matrices.
 
 \begin{proof}[Proof of the `only if' direction of Theorem \ref{matrixthm}]
  Let $\mathcal{P}_L$ denote the set of polynomials
 of the form $P(z) = \sum_{\alpha \in L} P_\alpha z^{\alpha}$
 where necessarily $P_{\alpha} = 0$ for all but
 finitely many $\alpha$.
 Suppose $F(z)$ preserves matrix Agler classes $\mathcal{MA}_d(L)$
 under the $\star$ product.  
  Then, the map $\phi: \mathcal{P}_L\to \mathcal{P}_L$, 
  $\phi(P) = P\star F$ is completely bounded
 under the Agler norm in the sense that for any
 matrix polynomial $P(z) = ( p_{j,k}(z))_{j,k}$ for $p_{j,k} \in \mathcal{P}_L$
 \[
 \| (p_{j,k}\star F)_{j,k}\|_{\mathcal{A}_d} = \| P\star F\|_{\mathcal{A}_d} \leq
 \|P\|_{\mathcal{A}_d}.
 \]
Thus, the completely bounded norm $\|\phi\|_{cb}$ is at most $1$.
 It is sufficient to consider the case $\|\phi\|_{cb}=1$ for
 otherwise we can replace $F$ with $F/\|\phi\|_{cb}$
 to obtain our desired conclusion and then rescale the unit
 vectors in our desired conclusion by $\sqrt{\|\phi\|_{cb}}$
 to see that a general $F$ satisfies the correct formula.

 We may view $\mathcal{P}_L$ as sitting inside of $\mathcal{A}$ inside of $B(\mathcal{H})$.
 By the Wittstock extension theorem (Theorem \ref{wittstock}),
 $\phi$ extends to a completely bounded 
 map $\psi: \mathcal{A} \to B(\mathcal{H})$ with $\|\phi\|_{cb} = 1 = \|\psi\|_{cb}$.
 By the Stinespring representation theorem (Theorem \ref{stinespring})
 there exists a Hilbert space $\mathcal{K}$, isometries $V_j: \mathcal{H}\to \mathcal{K}$
 for $j=1,2$, and a $*$-homomorphism $\pi: \mathcal{A} \to B(\mathcal{K})$
 such that
 \[
 (p\star F)(T) = V_1^* \pi(p) V_2.
 \]
 Set $S_j = \pi(z_j)$.  The $d$-tuple $S = (S_1,\dots, S_d)$
 is a commuting contractive tuple since $\pi$ is a $*$-homomorphism
 and hence necessarily completely contractive.
 Then, for $\alpha \in L$,
 \[
 F_{\alpha} T^{\alpha} = V_1^* S^{\alpha} V_2
 \]
 and by \eqref{evector} we have
 \[
F_\alpha =  F_{\alpha} \langle T^{\alpha}e, e\rangle
  = \langle S^{\alpha} V_2 e, V_1 e\rangle
\]
and we can choose $x = V_2e, y= V_1e$ 
and
the theorem is finished. 
 \end{proof}
 
 \section{Acknowledgements}
 Professor Rien Kaashoek was a kind and caring person with an 
 infectious enthusiasm for life and its subfields: mathematics and operator theory.
 I will always remember that when initially asking me if I would help organize IWOTA 2016,
 he told me that I did not need to give a positive answer at that time, only a positive semi-definite one!
  This paper is dedicated to his memory. 
 
 Thanks also to: Jerry Li for many discussions on the topic of this paper
 and the anonymous referee for a thoughtful report.
 
 \subsection*{Data availability statement}
 There is no data associated with this article.

\subsection*{Funding and/or Conflicts of interests/Competing interests}
Partially supported by NSF grant DMS-2247702. Employed by Washington University in St. Louis. 
The author has no further relevant financial or non-financial interests to disclose.

\begin{bibdiv}
\begin{biblist}

\bib{polyhedraAMY}{article}{
   author={Agler, Jim},
   author={McCarthy, John E.},
   author={Young, N. J.},
   title={On the representation of holomorphic functions on polyhedra},
   journal={Michigan Math. J.},
   volume={62},
   date={2013},
   number={4},
   pages={675--689},
   issn={0026-2285},
   review={\MR{3160536}},
   doi={10.1307/mmj/1387226159},
}

\bib{AMbook}{book}{
   author={Agler, Jim},
   author={McCarthy, John E.},
   title={Pick interpolation and Hilbert function spaces},
   series={Graduate Studies in Mathematics},
   volume={44},
   publisher={American Mathematical Society, Providence, RI},
   date={2002},
   pages={xx+308},
   isbn={0-8218-2898-3},
   review={\MR{1882259}},
   doi={10.1090/gsm/044},
}

\bib{ando}{article}{
   author={And\^{o}, T.},
   title={On a pair of commutative contractions},
   journal={Acta Sci. Math. (Szeged)},
   volume={24},
   date={1963},
   pages={88--90},
   issn={0001-6969},
   review={\MR{0155193}},
}

\bib{Barik}{article}{
   author={Barik, Sibaprasad},
   author={Bhattacharjee, Monojit},
   author={Das, B. Krishna},
   title={Commutant lifting in the Schur-Agler class},
   journal={J. Operator Theory},
   volume={91},
   date={2024},
   number={2},
   pages={399--419},
   issn={0379-4024},
   review={\MR{4750925}},
}

\bib{Bhowmik}{article}{
   author={Bhowmik, Mainak},
   author={Kumar, Poornendu},
   title={Factorization of functions in the Schur-Agler class related to
   test functions},
   journal={Proc. Amer. Math. Soc.},
   volume={152},
   date={2024},
   number={9},
   pages={3991--4001},
   issn={0002-9939},
   review={\MR{4781990}},
   doi={10.1090/proc/16900},
}

\bib{Caveny}{article}{
   author={Caveny, James},
   title={Bounded Hadamard products of $H\sp{p}$ functions},
   journal={Duke Math. J.},
   volume={33},
   date={1966},
   pages={389--394},
   issn={0012-7094},
   review={\MR{0193245}},
}

\bib{CrabbDavie}{article}{
   author={Crabb, M. J.},
   author={Davie, A. M.},
   title={von Neumann's inequality for Hilbert space operators},
   journal={Bull. London Math. Soc.},
   volume={7},
   date={1975},
   pages={49--50},
   issn={0024-6093},
   review={\MR{0365179}},
   doi={10.1112/blms/7.1.49},
}

\bib{Debnath}{article}{
   author={Debnath, Ramlal},
   author={Sarkar, Jaydeb},
   title={Factorizations of Schur functions},
   journal={Complex Anal. Oper. Theory},
   volume={15},
   date={2021},
   number={3},
   pages={Paper No. 49, 31},
   issn={1661-8254},
   review={\MR{4239032}},
   doi={10.1007/s11785-021-01101-x},
}

\bib{deBranges}{article}{
   author={de Branges, Louis},
   title={A proof of the Bieberbach conjecture},
   journal={Acta Math.},
   volume={154},
   date={1985},
   number={1-2},
   pages={137--152},
   issn={0001-5962},
   review={\MR{0772434}},
   doi={10.1007/BF02392821},
}

\bib{Holbrook}{article}{
   author={Holbrook, John A.},
   title={Schur norms and the multivariate von Neumann inequality},
   conference={
      title={Recent advances in operator theory and related topics},
      address={Szeged},
      date={1999},
   },
   book={
      series={Oper. Theory Adv. Appl.},
      volume={127},
      publisher={Birkh\"{a}user, Basel},
   },
   isbn={3-7643-6607-9},
   date={2001},
   pages={375--386},
   review={\MR{1902811}},
}

\bib{MR}{article}{
   author={Mashreghi, Javad},
   author={Ransford, Thomas},
   title={Hadamard multipliers on weighted Dirichlet spaces},
   journal={Integral Equations Operator Theory},
   volume={91},
   date={2019},
   number={6},
   pages={Paper No. 52, 13},
   issn={0378-620X},
   review={\MR{4032210}},
   doi={10.1007/s00020-019-2551-1},
}

\bib{Paulsenbook}{book}{
   author={Paulsen, Vern},
   title={Completely bounded maps and operator algebras},
   series={Cambridge Studies in Advanced Mathematics},
   volume={78},
   publisher={Cambridge University Press, Cambridge},
   date={2002},
   pages={xii+300},
   isbn={0-521-81669-6},
   review={\MR{1976867}},
}

\bib{RS}{article}{
   author={Ruscheweyh, St.},
   author={Sheil-Small, T.},
   title={Hadamard products of Schlicht functions and the P\'olya-Schoenberg
   conjecture},
   journal={Comment. Math. Helv.},
   volume={48},
   date={1973},
   pages={119--135},
   issn={0010-2571},
   review={\MR{0328051}},
   doi={10.1007/BF02566116},
}

\bib{sheilsmall}{article}{
   author={Sheil-Small, T.},
   title={On the convolution of analytic functions},
   journal={J. Reine Angew. Math.},
   volume={258},
   date={1973},
   pages={137--152},
   issn={0075-4102},
   review={\MR{0320761}},
   doi={10.1515/crll.1973.258.137},
}

\bib{Tov0}{article}{
   author={Tovstolis, Alexander V.},
   title={Estimates for the Hadamard product on Hardy and Bergman spaces},
   journal={Analysis (Berlin)},
   volume={34},
   date={2014},
   number={3},
   pages={243--256},
   issn={0174-4747},
   review={\MR{3258997}},
   doi={10.1515/anly-2012-1230},
}

\bib{Tov}{article}{
   author={Tovstolis, Alexander V.},
   title={The Hadamard product of polynomials in $H^0$},
   journal={J. Approx. Theory},
   volume={194},
   date={2015},
   pages={14--26},
   issn={0021-9045},
   review={\MR{3325519}},
   doi={10.1016/j.jat.2015.01.002},
}

\bib{varo}{article}{
   author={Varopoulos, N. Th.},
   title={On an inequality of von Neumann and an application of the metric
   theory of tensor products to operators theory},
   journal={J. Functional Analysis},
   volume={16},
   date={1974},
   pages={83--100},
   issn={0022-1236},
   review={\MR{0355642}},
   doi={10.1016/0022-1236(74)90071-8},
}

\bib{vN}{article}{
   author={von Neumann, Johann},
   title={Eine Spektraltheorie f\"{u}r allgemeine Operatoren eines
   unit\"{a}ren Raumes},
   language={German},
   journal={Math. Nachr.},
   volume={4},
   date={1951},
   pages={258--281},
   issn={0025-584X},
   review={\MR{0043386}},
   doi={10.1002/mana.3210040124},
}

\bib{Wittstock84}{article}{
   author={Wittstock, Gerd},
   title={On matrix order and convexity},
   conference={
      title={Functional analysis: surveys and recent results, III},
      address={Paderborn},
      date={1983},
   },
   book={
      series={North-Holland Math. Stud.},
      volume={90},
      publisher={North-Holland, Amsterdam},
   },
   isbn={0-444-86866-6},
   date={1984},
   pages={175--188},
   review={\MR{0761380}},
   doi={10.1016/S0304-0208(08)71474-9},
}

\end{biblist}
\end{bibdiv}

\end{document}